\documentclass[12pt,reqno]{amsart}
\usepackage{amscd,amsmath,amssymb,amsthm,amsfonts,epsfig,graphics}
\usepackage{geometry}
\usepackage{graphicx}
\usepackage{mathrsfs,amssymb}
\usepackage{bm}
\usepackage{url}
\usepackage{color}
\usepackage{amsmath}

\setcounter{MaxMatrixCols}{30}

\parskip=4pt

\theoremstyle{plain}

\newtheorem{lemma}{Lemma}

\newtheorem{theorem}{Theorem}

\numberwithin{equation}{section}

\setlength{\oddsidemargin}{0.2in}
\setlength{\evensidemargin}{0.2in}
\setlength{\textwidth}{5.7in}
\setlength{\textheight}{8.8in}
\allowdisplaybreaks[4]
\begin{document}
\title[APPROXIMATING POINTWISE PRODUCTS OF QUASIMODES]
 {APPROXIMATING POINTWISE PRODUCTS OF QUASIMODES}

\author{Mei Ling Jin}

\address{Department of Mathematics, Harbin
Institute of Technology, Harbin $150001$, P.R. China}
\email{jinsmledu@gmail.com}

\address{}
\email{}

\subjclass[2010]{35A15, 35C99, 47B06, 81Q05.}

\keywords{approximation pointwise products; quasimode; bilinear estimates; approximation in $H^{-1}$ norm.}

\dedicatory{}

\begin{abstract}
We obtain approximation bounds for products of quasimodes for the Laplace-Beltrami operator, on compact Riemannian manifolds of all dimensions without boundary. We approximate the products of quasimodes $uv$ by a low-degree vector space $B_{n}$, and we prove that the size of the space $\dim(B_{n})$ is small. In our paper, we first study bilinear quasimode estimates of all dimensions $d = 2, 3$, $d = 4,5$ and $d \ge 6$, respectively, to make the highest frequency disappear from the right hand. Furthermore, the result of the case $\lambda=\mu$ of bilinear quasimode estimates improves $L^{4}$ quasimodes estimates of Sogge-Zelditch in \cite{sogge6} when $d \ge 8$.  And on this basis, we give approximation bounds in $H^{-1}$ norm. We also prove approximation bounds for the products of  quasimodes in $L^{2}$ norm using the results of $L^{p}$-estimates for quasimodes in \cite{sogge3}. We extend the results of Lu-Steinerberger in \cite{lu} to quasimodes.
\end{abstract}

\maketitle

\section{Introduction}
Let $(M,g)$ be a compact Riemannian manifolds of dimension $d$ without boundary. We denote $\Delta = \Delta_{g}$ the Laplace-Beltrami operator associated to the metric $g$ on $M$. A quasimodes $u$ of $\Delta_{g}$ satisfies
\begin{align}\label{delta}
||(\Delta_{g} + \lambda^{2})u||_{2} \leq C_{0}\lambda&& \rm and &&||u||_{2} = 1,
\end{align}
where $C_{0}$ is independent of $\lambda$. Let ${u_{j}}$  be quasimodes with frequencies $\lambda_{j}$ arranged so that $0 = \lambda_{0} < \lambda_{1} \leq \lambda_{2} \leq \cdots.$ 

In our paper, we consider the problem about the function $u_{i}(x)u_{j}(x)$, for example,the size of $<u_{i}u_{j}, e_{k}>$. We extend the results in \cite{lu} to quasimodes. While there are many bilinear estimates for eigenfunctions and spectral clusters (see e.g. \cite{takaoka}, \cite{jjc}, \cite{hs}), there are very few results of bilinear estimates for quasimodes. Some results have been obtained in the presence of additional assumptions. For example, bilinear quasimodes estimates were studied for all dimensions under the assumption of microlocal support in \cite{guo1}.

Let $u$ be quasimodes satisfying \eqref{delta} with frequencies $\lambda$ and let $v$ be quasimodes satisfying \eqref{delta} with frequencies $\mu$, respectively. In the following, we want to approximate $uv$ by another vector space $B_{n}$ with lower degree $\dim(B_{n}) \sim c \cdot n$, often referred as density fitting in quantum chemistry literature.

To prove such a result, a first step is to prove the following bilinear estimates for quasimodes:
\begin{equation}\label{uv}
||uv||_{2} \lesssim \Lambda(d, \min(\lambda, \mu))(\lambda^{-1}||(-\Delta_{g}-\lambda^{2})u||_{2} + ||u||_{2})(\mu^{-1}||(-\Delta_{g}-\mu^{2})v||_{2} + ||v||_{2}),
\end{equation}
if $2 \leq d \leq 5$, 
and 
\begin{equation}\label{uv2}
\begin{aligned}
||uv||_{2} \lesssim &\Lambda(d,  \mu)(\lambda^{-1}||(-\Delta_{g}-\lambda^{2})u||_{2} + ||u||_{2})\\ &(\mu^{-1}||(-\Delta_{g}-\mu^{2})v||_{2} + ||v||_{2} + \mu^{-N+d/2-\sigma(q)}||(I-\Delta_{g})^{N/2}R_{\mu}v||_{2})
\end{aligned}
\end{equation}
if $d \ge 6$,
assuming that $\mu \leq \lambda$ and $N > d/2$ with $R_{\lambda}$ being the projection operator for $P = \sqrt{-\Delta_{g}}$ corresponding to the internal $[2\lambda, \infty)$. Furthermore, for $d = 6, 7$, we can get a better result than the result of \eqref{uv2} if $\lambda = \mu$ as follows (see more details in the proof of Theorem \ref{be4}):
\begin{equation}\label{qmr05}
||u||_{4}^{2} \lesssim \Lambda(d, \lambda)(\lambda^{-1}||(-\Delta_{g}-\lambda^{2})u||_{2} + ||u||_{2})^{2},
\end{equation}

where 
\begin{equation}\label{lanu}
\Lambda(d,\nu) := \begin{cases}
\nu^{\frac14} & \text{if } d=2,\\
\nu^{\frac12}\log^{\frac12}(\nu) & \text{if } d=3,\\
\nu^{\frac{d-2}{2}} & \text{if }d \ge 4.
\end{cases}
\end{equation}.

Bilinear quasimode estimates for all dimensions were obtained by Guo-Han-Tacy (\cite{guo1}). They assumed microlocal support in \cite{guo1}. The assumption is needed since the $L^{p}$ estimates of quasimodes do not hold for $p=\infty$ if $d=4$ or $p > \frac{2d}{d-4}$ if $d \ge 5$.

Notice that the case $\lambda = \mu$ when $d = 2, 3$ in (\ref{uv}) is a particular case of general $L^{p}$ estimates due to Sogge-Zelditch (\cite{sogge6}) which takes the following form, for every $p \in [2, +\infty]$,
\begin{equation}\label{u}
||u||_{L^{p}(M)} \leq C\lambda^{\sigma(p)} (\lambda^{-1}||(-\Delta_{g}-\lambda^{2})u||_{2} + ||u||_{2}),
\end{equation}
where $\sigma(4) = 1/8$ if $d = 2$, and more generally $\sigma(p) = \max(d(\frac{1}{2}-\frac{1}{p})-\frac{1}{2}, \frac{d-1}{2}(\frac{1}{2}-\frac{1}{p}))$.

And (\ref{u}) with semiclassical variants for all dimensions and all exponents $p \in (2, +\infty]$ was also given in \cite{ktz} under the assumption that $u$ is spectrally localized. This assumption is needed since (\ref{u}) does not hold for $p=\infty$ if $d=4$ or $p > \frac{2d}{d-4}$ if $d \ge 5$, as was explained in \cite{sogge3}.

The case $\lambda = \mu$ when $4 \leq d \leq 7$ in (\ref{uv}) is a particular case of $L^{4}$ estimates due to Sogge-Zelditch (\cite{sogge6}) which takes the following form, 
\begin{equation}\label{u1}
||u||_{L^{4}(M)} \leq C\lambda^{\sigma(4)} (\lambda^{-1}||(-\Delta_{g}-\lambda^{2})u||_{2} + ||u||_{2}),
\end{equation}
where $\sigma(4) = \frac{d-2}{4}$ if $4 \leq d \leq 7$.

Moreover, the result of the case $\lambda=\mu$ when $d \ge 8$ in (\ref{uv2}) improves $L^{4}$ quasimodes estimates of Sogge-Zelditch (\cite{sogge6}). 

Indeed, when $\lambda=\mu$ for $d \ge 8$ in (\ref{uv2}), we have
\begin{equation}\label{qmr00}
\begin{aligned}
||u||_{4}^{2}  &\lesssim \Lambda(d, \lambda)(\lambda^{-1}||(-\Delta_{g}-\lambda^{2})u||_{2} + ||u||_{2})\\
&(\lambda^{-1}||(-\Delta_{g}-\lambda^{2})u||_{2} + ||u||_{2} + \lambda^{-N+d/2-\sigma(4)}||(I-\Delta)^{N/2}R_{\lambda}u||_{2})\\&=\lambda^{2\sigma(4)}(\lambda^{-1}||(-\Delta_{g}-\lambda^{2})u||_{2} + ||u||_{2})\\
&(\lambda^{-1}||(-\Delta_{g}-\lambda^{2})u||_{2} + ||u||_{2} + \lambda^{-N+d/2-\sigma(4)}||(I-\Delta)^{N/2}R_{\lambda}u||_{2}).
\end{aligned}
\end{equation}
And according to the result in \cite{sogge6}, we have
\begin{equation}\label{qmr01}
\begin{aligned}
||u||_{4}^{2}  &\lesssim \lambda^{2\sigma(4)}
(\lambda^{-1}||(-\Delta_{g}-\lambda^{2})u||_{2} + ||u||_{2} + \lambda^{-N+d/2-\sigma(4)}||(I-\Delta)^{N/2}R_{\lambda}u||_{2})^{2}.
\end{aligned}
\end{equation}

The proof of bilinear estimates for quasimodes is based on the following results of bilinear estimates for spectral clusters in \cite{burq1} of dimension 2, in \cite{burq2} of dimension 3 and in \cite{burq3, burq4} of higher dimensions (see the similar techniques in \cite{sogge6}):
\begin{equation}\label{chiuv}
||\chi_{\lambda}u\chi_{\mu}v||_{2} \lesssim \Lambda(d, min(\lambda, \mu))||u||_{2}||v||_{2},   
\end{equation}
where $\chi_{\lambda}$ is the projection operators corresponding to the unit internal $[\lambda, \lambda + 1]\}$.

Notice that the case $\lambda = \mu$ in (\ref{chiuv}) is a particular case of general $L^{p}$ estimates due to Sogge (\cite{sogge2}, \cite{sogge4}, \cite{sogge5}) which take the following form, for every $p \in [2, +\infty]$,
\begin{equation}\label{chiu}
||\chi_{\lambda}u||_{L^{p}(M)} \leq C\lambda^{\sigma(p)}||u||_{L^{2}(M)}, 
\end{equation}
where $\sigma(4) = 1/8$ if $d = 2$.

The advantages of using estimates (\ref{uv}), rather than applying Holder's inequality and (\ref{u}), is to make the highest frequency disappear from the right hand side, which is crucial in the nonlinear analysis. And we can also get a better results of the approximation pointwise products of quasimodes.

Then the next step is to use bilinear estimates for quasimodes of Theorem \ref{be2} and Theorem \ref{be3} to prove Theorem \ref{fix} and Theorem \ref{h1}.

The rest of this paper is organized as follows. In Section 2 we give the main result, $H^{-1}$ approximation. Section 3 consists in two parts. First we prove bilinear estimates for quasimodes on a two dimensional compact Riemannian manifold without boundary. Then we obtain bilinear quasimode estimates on a three dimensinal compact Riemannian manifold. In Section 4 the bilinear quasimode estimates for higher dimensions are also obtained. Section 5 is denoted to the proof of theorem \ref{h1} based on the results of Theorem \ref{be2}, Theorem \ref{be3} and Theorem \ref{be4}. In section 6, we show that the product of quasimodes $uv$ can be well approximated by the elements of $B_{\nu}$ if $\nu$ is not much larger than $n$, based on the estimates of quasimodes in \cite{ktz}. We improve the results in Lu-Steinerberger (\cite{lu}), since we can get approximation of products for quasimodes. Section 7 is denoted to the proof of Thereom \ref{fix} and Lemma \ref{for}.

\section{Main results}
Motivated by the above, for $1 \leq i \leq j \leq n$, we are interested in estimating products of quasimodes $u_{i}u_{j}$ by finite linear combinations of quasimodes. With this in mind, let for $\nu \in \mathbb{N}$
\begin{equation}\label{E}
E_{\nu}f = \sum_{k=0}^{\nu}< f, e_{k}> e_{k}
\end{equation}
denote the projection of $f \in L^{2}(M)$ onto the space, $B_{\nu}$, spanned by $\{e_{k}\}_{k=0}^{\nu}$, and 
\begin{equation}\label{R}
\mathcal{R}_{\nu}f = f - E_{\nu}f
\end{equation}
denote the ``remainder term" for this projection. Thus,
\begin{equation}\label{f}
||f - E_{\nu}f||_{L^{2}(M)}^{2} = ||\mathcal{R}_{\nu}f||_{2}^{2} = \sum_{k>\nu}|<f,e_{k}>|^{2}.
\end{equation}
Since a multiple of $|x-y|^{-1}$ is the fundamental solution of the Laplacian in $\mathbb{R}^{3}$, the appropriate physically relevant problems involve the Sobolev space $H^{-1}$ equipped with the norm defined by
\begin{equation}\label{}
||f||_{H^{-1}}^{2} = ||(I - \Delta_{g})^{-1/2}f||_{L_{2}}^{2} = \sum(1 + \lambda_{k}^{2})^{-1}|\left<f, e_{k}\right>|^{2}.
\end{equation}

Our $H^{-1}$ approximation result then is the following.
\begin{theorem}\label{h1}
Let $(M,g)$ be a compact Riemannian manifold of dimension $d$. Then if $1 \leq i \leq n < \nu$, $1 \leq j \leq m < \nu$ and $\varepsilon \in (0,1)$ we have 
\begin{equation}\label{2.2}
||\mathcal{R}_{\nu}(u_{i}u_{j})||_{H^{-1}}<\varepsilon,    \ \text{if} \ \nu = O(\Omega(d, \min(\lambda_{m}, \lambda_{n}))\varepsilon^{-d}),
\end{equation}
\end{theorem}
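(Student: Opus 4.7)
The plan is to reduce the $H^{-1}$ approximation bound to the $L^{2}$ bilinear quasimode estimates \eqref{uv}--\eqref{uv2} already supplied by Theorems \ref{be2}--\ref{be4}, and then close the inequality by means of Weyl's law. Starting from the spectral definition in \eqref{f},
\[
\|\mathcal{R}_{\nu}(u_{i}u_{j})\|_{H^{-1}}^{2}=\sum_{k>\nu}(1+\lambda_{k}^{2})^{-1}|\langle u_{i}u_{j},e_{k}\rangle|^{2},
\]
I would invoke Weyl's law for the Laplace--Beltrami operator on a compact $d$-dimensional manifold, which gives $\lambda_{k}\gtrsim k^{1/d}$. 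Consequently the weight satisfies $(1+\lambda_{k}^{2})^{-1}\lesssim \nu^{-2/d}$ uniformly in $k>\nu$; pulling it outside the sum and applying Parseval produces the clean reduction
\[
\|\mathcal{R}_{\nu}(u_{i}u_{j})\|_{H^{-1}}^{2}\lesssim \nu^{-2/d}\,\|u_{i}u_{j}\|_{L^{2}(M)}^{2}.
\]

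The second step is to insert the appropriate bilinear estimate. For $2\le d\le 5$, inequality \eqref{uv} together with the normalization $\|u_{i}\|_{2}=\|u_{j}\|_{2}=1$ and the quasimode defect bounds $\lambda_{i}^{-1}\|(-\Delta_{g}-\lambda_{i}^{2})u_{i}\|_{2},\,\lambda_{j}^{-1}\|(-\Delta_{g}-\lambda_{j}^{2})u_{j}\|_{2}\le C_{0}$ yields
\[
\|u_{i}u_{j}\|_{2}\lesssim \Lambda(d,\min(\lambda_{i},\lambda_{j}))\le \Lambda(d,\min(\lambda_{n},\lambda_{m})),
\]
where the last step uses $i\le n$, $j\le m$ and monotonicity of $\Lambda(d,\cdot)$ read off from \eqref{lanu}. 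Substituting back and requiring the right-hand side to be at most $\varepsilon^{2}$ gives
\[
\nu\gtrsim \Lambda(d,\min(\lambda_{n},\lambda_{m}))^{d}\,\varepsilon^{-d},
\]
which identifies $\Omega(d,\nu):=\Lambda(d,\nu)^{d}$ and establishes \eqref{2.2} in low dimensions.

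The main obstacle is the $d\ge 6$ regime, where \eqref{uv2} introduces an additional correction of the form $\mu^{-N+d/2-\sigma(q)}\,\|(I-\Delta_{g})^{N/2}R_{\mu}v\|_{2}$ originating from the high-frequency spectral tail $[2\mu,\infty)$. The key point I would extract from the proof of Theorem \ref{be4} is that on this window the quasimode identity $(\lambda_{k}^{2}-\mu^{2})\langle v,e_{k}\rangle=-\langle(-\Delta_{g}-\mu^{2})v,e_{k}\rangle$ forces $|\langle v,e_{k}\rangle|\lesssim \mu\,\lambda_{k}^{-2}$, so that a suitably large choice of $N>d/2$ makes the negative power $\mu^{-N+d/2-\sigma(q)}$ absorb this tail into the principal $\Lambda(d,\mu)$ contribution. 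Once this correction is shown to be of the same order as the leading bilinear bound, the Weyl-law extraction proceeds exactly as above and yields \eqref{2.2} with $\Omega(d,\nu)=\Lambda(d,\nu)^{d}$.
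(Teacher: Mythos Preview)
Your argument coincides with the paper's proof in Section~\ref{res}: extract the weight $(1+\lambda_{k}^{2})^{-1}\le \lambda_{\nu}^{-2}\approx \nu^{-2/d}$ from the $H^{-1}$ sum, bound $\|u_{i}u_{j}\|_{2}$ via Theorems~\ref{be2}--\ref{be4}, use the quasimode normalization \eqref{delta} to make the parenthetical factors $O(1)$, and then invert by Weyl to identify $\Omega(d,\cdot)=\Lambda(d,\cdot)^{d}$ as in \eqref{2.3}. For $d\ge 6$ the paper, exactly like you, simply asserts that the correction $\mu^{-N+d/2-\sigma(q)}\|(I-\Delta_{g})^{N/2}R_{\mu}v\|_{2}$ is $O(1)$ without further argument; be aware, however, that your specific mechanism (``choose $N$ large'') does not work on its own, since under \eqref{delta} alone the norm $\|(I-\Delta_{g})^{N/2}R_{\mu}v\|_{2}$ may grow in $N$ at least as fast as the prefactor $\mu^{-N}$ decays, so this step stays at the same informal level as in the paper.
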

where 
\begin{equation}\label{2.3}
\Omega(d, \mu) = 
             \begin{cases} 
\mu^{\frac{1}{2}} & \text{if } d = 2,\\
                    \mu^{\frac{3}{2}}\log^{\frac{3}{2}}(\mu) & \text{if } d=3,\\
                    \mu^{\frac{d(d-2)}{2}} &  \text{if } d \ge 4.
                 \end{cases}  
\end{equation}

\section{Bilinear quasimode estimates in two and three dimensions}

We introduce the following notation: given $\mu \ge 1$, we set
\begin{equation}\label{nu}
\Lambda(d,\mu) :=  
             \begin{cases} 
                 \mu^{\frac14} &  \text{if }  d = 2,\\
                   \mu^{\frac12}\log^{\frac12}(\mu) & \text{if } d=3.
                 \end{cases}  
\end{equation}
\begin{theorem}\label{be2}
$(M,g)$ is a 2 dimensional compact Riemannian manifold without boundary. Let $u$ be quasimodes satisfying (\ref{delta}) with frequencies $\lambda$ and let $v$ be quasimodes satisfying (\ref{delta}) with frequencies $\mu$, respectively. Assume that $1 \leq \mu \leq \lambda$. And $\{\lambda_{j}^{2}\}$ are the eigenvalues of $- \Delta_{g}$. Then the following bilinear estimates hold
\begin{equation}\label{qmr3} 
||uv||_{2} \lesssim \Lambda(d, \mu)(\lambda^{-1}||(-\Delta_{g}-\lambda^{2})u||_{2} + ||u||_{2})(\mu^{-1}||(-\Delta_{g}-\mu^{2})v||_{2} + ||v||_{2}).
\end{equation}
\end{theorem}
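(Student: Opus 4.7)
The plan is to reduce the bilinear quasimode estimate to the bilinear spectral cluster estimate \eqref{chiuv} via a unit-width frequency decomposition. I would write $u = \sum_{k \ge 0} \chi_k u$ and $v = \sum_{j \ge 0} \chi_j v$, where $\chi_k$ denotes the projection onto the spectral interval $[k,k+1]$ of $P=\sqrt{-\Delta_g}$. Expanding the pointwise product into a double sum and applying \eqref{chiuv} termwise in dimension $d=2$ yields
$$\|uv\|_{2} \le \sum_{k,j \ge 0} \|(\chi_k u)(\chi_j v)\|_{2} \lesssim \sum_{k,j} (\min(k,j))^{1/4}\,\|\chi_k u\|_{2}\,\|\chi_j v\|_{2},$$
so the task reduces to estimating this weighted double series.

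The next step is to extract decay of the spectral cluster norms from the quasimode equation. Since $\chi_k$ commutes with $\Delta_g$ and the eigenvalues of $-\Delta_g$ on the range of $\chi_k$ lie in $[k^{2},(k+1)^{2}]$, whenever $|k-\lambda|\ge 2$ the operator $\Delta_g+\lambda^{2}$ is invertible on that range with the norm of its inverse bounded by a constant times $|k^{2}-\lambda^{2}|^{-1}$, so that
$$\|\chi_k u\|_{2} \lesssim \frac{\|\chi_k(\Delta_g+\lambda^{2})u\|_{2}}{|k^{2}-\lambda^{2}|}\qquad (|k-\lambda|\ge 2),$$
while for the $O(1)$ clusters with $|k-\lambda|<2$ I would use the trivial bound $\|\chi_k u\|_{2}\le \|u\|_{2}$. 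The analogous bounds hold for $\chi_j v$ with $\mu$ in place of $\lambda$, and Plancherel gives $\sum_k\|\chi_k(\Delta_g+\lambda^{2})u\|_{2}^{2}=\|(\Delta_g+\lambda^{2})u\|_{2}^{2}$ and similarly for $v$.

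With these ingredients I would split the weighted double sum into four pieces depending on whether each of $k,j$ is ``close'' to the corresponding frequency $\lambda$ or $\mu$ (within $2$) or ``far'' from it. In the close--close region there are only $O(1)$ pairs and the contribution is dominated by $\mu^{1/4}\|u\|_{2}\|v\|_{2}$ directly. In each of the remaining three regions I would apply Cauchy--Schwarz in the far variable(s), converting the weighted sum into a product of an $L^{2}$-type norm controlled by the Plancherel identity (which supplies the $\|(\Delta_g+\lambda^{2})u\|_{2}$ or $\|(\Delta_g+\mu^{2})v\|_{2}$ factor) and an arithmetic sum of the form $\sum_{|k-\lambda|\ge 2} k^{1/2}/|k^{2}-\lambda^{2}|^{2}$, uniformly bounded in $\lambda$ by comparison with an integral.

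The main obstacle will be the careful case analysis needed to confirm that the weight in front of the final bound is $\mu^{1/4}$ rather than $\lambda^{1/4}$. Since $\mu\le\lambda$, the cluster weight $(\min(k,j))^{1/4}$ can be as large as $\lambda^{1/4}$ when both indices sit near $\lambda$, but when $j$ is close to $\mu$ the weight $j^{1/4}$ is $\sim\mu^{1/4}$, and when $j$ is far from $\mu$ the cluster-norm decay $|j^{2}-\mu^{2}|^{-1}$ provides ample room to absorb any growth in $(\min(k,j))^{1/4}$; in particular the region $k\gg\lambda$ (or $j\gg\mu$) is handled by the quadratic decay $|k^{2}-\lambda^{2}|^{-1}\sim k^{-2}$. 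Tracking these cases and verifying uniform boundedness of all the resulting arithmetic sums is the main technical work; combining the four regional estimates then delivers the bound with the product $(\lambda^{-1}\|(-\Delta_g-\lambda^{2})u\|_{2}+\|u\|_{2})(\mu^{-1}\|(-\Delta_g-\mu^{2})v\|_{2}+\|v\|_{2})$ on the right, as claimed.
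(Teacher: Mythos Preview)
Your approach is correct and genuinely different from the paper's. The paper first performs a smooth low/high frequency splitting $u=L_\lambda u+H_\lambda u$ (cutoff at $2\lambda$) and $v=L_\mu v+H_\mu v$; the high-frequency pieces are handled by Sobolev embedding combined with the $L^p$ quasimode bounds of \cite{sogge3} (e.g.\ $\|H_\lambda u\|_4$ via $(1-\Delta_g)^{-1/4}:L^2\to L^4$, and $\|v\|_4\lesssim\mu^{1/8}(\cdots)$), while only the low--low product $L_\lambda u\,L_\mu v$ is decomposed into unit clusters. There the paper applies a \emph{pointwise} Cauchy--Schwarz with weights $(1+|k-\lambda|)^{-1}(1+|j-\mu|)^{-1}$ to pass directly to an $\ell^2$ double sum, and then simply uses $\min(k,j)\le j\le 2\mu$ (available because of the cutoff on $v$) to pull out the factor $\mu^{1/4}$. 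By contrast, you skip the low/high splitting entirely, take the triangle inequality to an $\ell^1$ double sum, and recover the right powers through a close/far case analysis on each index. Your route is more self-contained in that it uses only the bilinear cluster estimate \eqref{chiuv} and elementary spectral decay, avoiding the $L^p$ quasimode input from \cite{sogge3}; the paper's route is cleaner bookkeeping-wise because the low-frequency cutoff on $v$ makes the $\mu^{1/4}$ appear without any case splitting. One point to tighten in your write-up: the arithmetic sums such as $\sum_{|k-\lambda|\ge2}|k^2-\lambda^2|^{-2}$ must be shown to be $\lesssim\lambda^{-2}$ (and $\sum_{|j-\mu|\ge2}j^{1/2}|j^2-\mu^2|^{-2}\lesssim\mu^{-3/2}$), not merely ``uniformly bounded,'' since these specific powers are what produce the factors $\lambda^{-1}$ and $\mu^{-1}\cdot\mu^{1/4}$ in the final estimate.
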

\begin{proof}
We split our $u$ satisfying (\ref{delta}) into ``low" and ``high" frequency parts. To this end we choose $0 \leq \psi \in C^{\infty}(\mathbb{R})$ satisfying $\psi(r) = 1$ for $r \leq 2$ and $\psi(r) = 0$ for $r \ge 4$. We then set $\rho(r) = 1 - \psi(r)$ and note that $\rho(r) = 0$ if $r < 2$. \\
We can now write 
\begin{equation}
 u = L_{\lambda}u + H_{\lambda}u,   
\end{equation}
where $L_{\lambda} = \psi(P/\lambda)$ and $H_{\lambda} = \rho(P/\lambda)$. Here $P = \sqrt{-\Delta_{g}}$.\\
Since
\begin{equation}
||uv||_{2} \leq ||vL_{\lambda}u||_{2} + ||vH_{\lambda}u||_{2},  
\end{equation}
it is enough to prove that 
\begin{equation}
||vL_{\lambda}u||_{2} \lesssim \Lambda(d, \mu)(\lambda^{-1}||(-\Delta_{g}-\lambda^{2})u||_{2} + ||u||_{2})(\mu^{-1}||(-\Delta_{g}-\mu^{2})v||_{2} + ||v||_{2}), 
\end{equation}
\\and
\begin{equation}
||vH_{\lambda}u||_{2} \lesssim \Lambda(d, \mu)(\lambda^{-1}||(-\Delta_{g}-\lambda^{2})u||_{2} + ||u||_{2})(\mu^{-1}||(-\Delta_{g}-\mu^{2})v||_{2} + ||v||_{2}). 
\end{equation}
To estimate the high frequency part, the assumptions on $p=4$ then ensures that
\begin{equation}
 (1 - \Delta_{g})^{-(\frac{1}{2}-\frac{1}{p})} : L^{2} \rightarrow L^{p} \ \ \and \ \ \frac{1}{2}-\frac{1}{p} \leq 1.  
\end{equation}
Thus 
\begin{equation}\label{Hu}
\begin{aligned}
 ||H_{\lambda}u||_{4} &\lesssim ||P^{1/2}H_{\lambda}u||_{2}\\
 &\lesssim \lambda^{-3/2}||(-\Delta_{g}-\lambda^{2})H_{\lambda}u||_{2}\\
 &\leq \lambda^{-3/2}||(-\Delta_{g}-\lambda^{2})u||_{2} \leq \lambda^{-1/2}(\lambda^{-1}||(-\Delta_{g}-\lambda^{2})u||_{2} + ||u||_{2}),
 \end{aligned}
\end{equation}
using in the second inequality the fact that $H_{\lambda}u$ only has frequencies larger than $2\lambda$, as well as $2(\frac{1}{2}-\frac{1}{p}) \leq 2$, due to our assumptions on $p$.\\
If we combine (\ref{Hu}) and the $L^{p}$ estimates of quasimodes in \cite{sogge3}, we have
\begin{equation}
\begin{aligned}
||vH_{\lambda}u||_{2} &\leq ||H_{\lambda}u||_{4}||v||_{4}\\ 
&\leq \lambda^{-1/2}(\lambda^{-1}||(-\Delta_{g}-\lambda^{2})u||_{2} + ||u||_{2})\mu^{1/8}(\mu^{-1}||(-\Delta_{g}-\mu^{2})v||_{2} + ||v||_{2})\\
&<\lambda^{-3/8}(\lambda^{-1}||(-\Delta_{g}-\lambda^{2})u||_{2} + ||u||_{2})(\mu^{-1}||(-\Delta_{g}-\mu^{2})v||_{2} + ||v||_{2}).
\end{aligned}
\end{equation}
When $p=\infty$, we use the fact that 
\begin{equation*}
(1 - \Delta_{g})^{-\frac{1}{2}-\varepsilon}  : L^{2} \rightarrow L^{\infty}   
\end{equation*}
if $\varepsilon >0$, as well as the fact that $\frac{1}{2} + \varepsilon < 1$ if $\varepsilon < \frac{1}{2}$. Based on this, we have
\begin{equation}
 \begin{aligned}
||H_{\mu}v||_{\infty} &\lesssim ||P^{1+\varepsilon}H_{\mu}v||_{2}\\
&\leq \mu^{\varepsilon} \mu^{-1}||(-\Delta_{g}-\mu^{2})H_{\mu}v||_{2}\\
&\leq \mu^{\varepsilon} \mu^{-1}||(-\Delta_{g}-\mu^{2})v||_{2}\\
&\leq \mu^{1/4}(\mu^{-1}||(-\Delta_{g}-\mu^{2})v||_{2} + ||v||_{2})
 \end{aligned}   
\end{equation}
Thus,
\begin{equation}
\begin{aligned}
 ||L_{\lambda}uH_{\mu}v||_{2} &\leq ||L_{\lambda}u||_{2}||H_{\mu}v||_{\infty}\\ 
 &\lesssim ||u||_{2} \mu^{1/4}(\mu^{-1}||(-\Delta_{g}-\mu^{2})v||_{2} + ||v||_{2})\\
 &\leq \mu^{1/4}(\lambda^{-1}||(-\Delta_{g}-\lambda^{2})u||_{2} + ||u||_{2})(\mu^{-1}||(-\Delta_{g}-\mu^{2})v||_{2} + ||v||_{2}).
\end{aligned}
\end{equation}
To prove the remaining part of estimates, we write
\begin{equation}
L_{\lambda}u = \sum_{k \leq 2\lambda}\chi_{k}u, \ \ \  L_{\mu}v = \sum_{j \leq 2\mu} \chi_{j}v.    
\end{equation}
where $\chi_{k}$ is the projection operators corresponding to the unit internal $[k, k + 1]$ and $\chi_{j}$ is the projection operators corresponding to the unit internal $[j, j + 1]$.

By Cauchy-Schwarz inequality, we have
\begin{equation}
|L_{\lambda}uL_{\mu}v| \leq (\sum_{k \leq 2\lambda, j \leq 2\mu}|(1+|k-\lambda|)\chi_{k}u(1+|j-\mu|)\chi_{j}v|^{2})^{1/2}    
\end{equation}
Thus,
\begin{equation}
\begin{aligned}
||L_{\lambda}uL_{\mu}v||_{2}^{2} &\leq \sum_{k \leq 2\lambda, j \leq 2\mu}(1+|k-\lambda|)^{2}(1+|j-\mu|)^{2}||\chi_{k}u\chi_{j}v||_{2}^{2}\\
&\leq \sum_{k \leq 2\lambda, j \leq 2\mu} (\min(j,k))^{1/2}||(1+|\lambda-k|\chi_{k}u)||_{2}^{2}||(1+|\mu-j|)\chi_{j}v||_{2}^{2}\\
&\leq \mu^{1/2}(\sum_{n=0}^{\infty}||(1+|k-\lambda|)\chi_{k}u||_{2}^{2})(\sum_{n=0}^{\infty}||(1+|j-\mu|)\chi_{j}v||_{2}^{2})\\
& \leq \mu^{1/2}(\lambda^{-1}||(-\Delta_{g}-\lambda^{2})u||_{2} + ||u||_{2})^{2}(\mu^{-1}||(-\Delta_{g}-\mu^{2})v||_{2} + ||v||_{2})^{2},
\end{aligned}
\end{equation}
since the estimates for spectral clusters in \cite{burq1}:
\begin{equation*}
||\chi_{\lambda}u\chi_{\mu}v||_{2} \lesssim (\min(\lambda, \mu))^{1/4}||u||_{2}||v||_{2}.    
\end{equation*}
This completes the proof of theorem \ref{be2}.
\end{proof}

\begin{theorem}\label{be3}
$(M,g)$ is a 3 dimensional compact Riemannian manifold without boundary. Let $u$ be quasimodes satisfying (\ref{delta}) with frequencies $\lambda$ and let $v$ be quasimodes satisfying (\ref{delta}) with frequencies $\mu$, respectively. Assume that $1 \leq \mu \leq \lambda$. Then the following bilinear estimates holds
\begin{equation}\label{qmr31}
||uv||_{2} \lesssim \Lambda(d, \mu)(\lambda^{-1}||(-\Delta_{g}-\lambda^{2})u||_{2} + ||u||_{2})(\mu^{-1}||(-\Delta_{g}-\mu^{2})v||_{2} + ||v||_{2}).
\end{equation}
\end{theorem}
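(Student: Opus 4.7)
The plan is to run the same argument as the proof of Theorem \ref{be2}, with each two-dimensional input replaced by its three-dimensional counterpart. I would use the cutoffs $\psi, \rho \in C^\infty(\mathbb{R})$ as in that proof and split $u = L_\lambda u + H_\lambda u$ and $v = L_\mu v + H_\mu v$, where $L_\nu = \psi(P/\nu)$ and $H_\nu = \rho(P/\nu)$, so that $H_\nu$ is spectrally localized in $P \geq 2\nu$. Then
\begin{equation*}
uv = L_\lambda u \cdot L_\mu v + L_\lambda u \cdot H_\mu v + v \cdot H_\lambda u,
\end{equation*}
and the task reduces to three separate estimates, mirroring the 2D splitting.

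For $||v H_\lambda u||_2$ I would use Holder with $L^4 \cdot L^4$. The three-dimensional Sobolev embedding $(1 - \Delta_g)^{-3/4}: L^2 \to L^4$ combined with the spectral gain $P^{3/4}/(P^2 - \lambda^2) \lesssim \lambda^{-5/4}$ on the range of $H_\lambda$ produces
\begin{equation*}
||H_\lambda u||_4 \lesssim \lambda^{-1/4}\bigl(\lambda^{-1}||(-\Delta_g - \lambda^2) u||_2 + ||u||_2\bigr).
\end{equation*}
Pairing this with the Sogge-Zelditch 3D $L^4$ quasimode estimate ($\sigma(4) = 1/4$ in 3D) from \cite{sogge3} gives the required bound, since $\mu^{1/4}\lambda^{-1/4} \leq 1$ under the hypothesis $\mu \leq \lambda$. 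For the mixed term $||L_\lambda u H_\mu v||_2$ one cannot use $L^2 \cdot L^\infty$ naively, since the 3D Sobolev $L^\infty$ embedding requires $3/2 + \varepsilon$ derivatives and incurs a $\mu^{\varepsilon}$ loss that exceeds $\log^{1/2}(\mu)$. Instead I would bound $||H_\mu v||_\infty$ by decomposing $H_\mu v = \sum_{k \geq 2\mu}\chi_k v$, using the 3D $L^\infty$ cluster estimate $||\chi_k v||_\infty \lesssim k \, ||\chi_k v||_2$, and applying Cauchy-Schwarz against the weights $(k^2 - \mu^2)^2$; since $\sum_{k \geq 2\mu} k^2/(k^2 - \mu^2)^2 \lesssim \mu^{-1}$, this yields $||H_\mu v||_\infty \lesssim \mu^{1/2}(\mu^{-1}||(-\Delta_g - \mu^2) v||_2 + ||v||_2)$, which combined with $||L_\lambda u||_2 \leq ||u||_2$ delivers the desired bound.

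For the low-low piece I would follow the 2D proof verbatim: write $L_\lambda u = \sum_{k \leq 2\lambda}\chi_k u$ and $L_\mu v = \sum_{j \leq 2\mu}\chi_j v$, insert the weights $(1+|k-\lambda|)(1+|j-\mu|)$, and apply Cauchy-Schwarz. The 3D bilinear spectral cluster estimate of Burq-Gerard-Tzvetkov \cite{burq2},
\begin{equation*}
||\chi_k u \, \chi_j v||_2 \lesssim \bigl(\min(k,j)\bigr)^{1/2} \log^{1/2}\bigl(\min(k,j)\bigr)\,||u||_2\,||v||_2,
\end{equation*}
then delivers the $\Lambda(3, \mu) = \mu^{1/2}\log^{1/2}(\mu)$ factor after summing over $j \leq 2\mu$ (the smaller range), while the geometric sums in $|k - \lambda|$ and $|j - \mu|$ reassemble the two quasimode norms on the right of (\ref{qmr31}). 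The main technical obstacle is the logarithmic loss at the critical exponent of the 3D bilinear cluster estimate: one must arrange the weighted Cauchy-Schwarz so that the $\log^{1/2}(\mu)$ appears paired only with $\mu^{1/2}$ and is not amplified by the outer summation in $k,j$. This is precisely what the weighted scheme inherited from the 2D proof is designed to achieve, so no genuinely new structural idea should be needed beyond importing the 3D versions of the Sobolev, Sogge, and bilinear cluster estimates.
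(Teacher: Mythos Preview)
Your proposal is correct and follows essentially the same route as the paper: the same three-term splitting $uv = L_\lambda u\,L_\mu v + L_\lambda u\,H_\mu v + v\,H_\lambda u$, the same H\"older/Sobolev treatment of $vH_\lambda u$, and the same weighted Cauchy--Schwarz combined with the Burq--G\'erard--Tzvetkov bilinear cluster estimate from \cite{burq2} for the low--low piece. The only variation is in bounding $\|H_\mu v\|_\infty$, where the paper uses a dyadic Littlewood--Paley decomposition $H_\mu v=\sum_{2^j\ge 2\mu}S_j v$ while you use unit spectral clusters together with the Sogge $L^\infty$ cluster bound and Cauchy--Schwarz against $(k^2-\mu^2)^2$; both arguments deliver the same $\mu^{1/2}$ bound with no logarithmic loss, and note that the Sobolev embedding you invoke should read $(1-\Delta_g)^{-3/8}\colon L^2\to L^4$ rather than $(1-\Delta_g)^{-3/4}$, though your subsequent use of $P^{3/4}$ and the gain $\lambda^{-5/4}$ is correct.
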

\begin{proof}
As before,
\begin{equation}
||uv||_{2} \leq ||vL_{\lambda}u||_{2} + ||vH_{\lambda}u||_{2}.  
\end{equation}
To estimate the high frequency part, the assumptions on $p=4$ then ensure that
\begin{equation}
 (1 - \Delta_{g})^{-\frac{3}{2}(\frac{1}{2}-\frac{1}{p})} : L^{2} \rightarrow L^{p} \ \ \ \and \ \ \  \frac{3}{2}(\frac{1}{2}-\frac{1}{p}) \leq 1.
\end{equation}
Thus 
\begin{equation}\label{Hu3}
\begin{aligned}
 ||H_{\lambda}u||_{4} &\lesssim ||P^{3/4}H_{\lambda}u||_{2}\\
 &\lesssim \lambda^{-5/4}||(-\Delta_{g}-\lambda^{2})H_{\lambda}u||_{2}\\
 &\leq \lambda^{-5/4}||(-\Delta_{g}-\lambda^{2})u||_{2} \leq \lambda^{-1/4}(\lambda^{-1}||(-\Delta_{g}-\lambda^{2})u||_{2} + ||u||_{2}),
 \end{aligned}
\end{equation}
using in the second inequality the fact that $H_{\lambda}u$ only has frequencies larger than $2\lambda$, as well as $3(\frac{1}{2}-\frac{1}{p}) \leq 2$, due to our assumptions on $p$.\\
If we combine (\ref{Hu3}) and the $L^{p}$ estimates of quasimodes in \cite{sogge3}, we have
\begin{equation}
\begin{aligned}
||vH_{\lambda}u||_{2} &\leq ||H_{\lambda}u||_{4}||v||_{4}\\ &\leq \lambda^{-1/4}(\lambda^{-1}||(-\Delta_{g}-\lambda^{2})u||_{2} + ||u||_{2})\mu^{1/4}(\mu^{-1}||(-\Delta_{g}-\mu^{2})v||_{2} + ||v||_{2})\\
&<\mu^{1/2}(\lambda^{-1}||(-\Delta_{g}-\lambda^{2})u||_{2} + ||u||_{2})(\mu^{-1}||(-\Delta_{g}-\mu^{2})v||_{2} + ||v||_{2})\\
&<\Lambda(d, \mu)(\lambda^{-1}||(-\Delta_{g}-\lambda^{2})u||_{2} + ||u||_{2})(\mu^{-1}||(-\Delta_{g}-\mu^{2})v||_{2} + ||v||_{2}).
\end{aligned}
\end{equation}
So we need to estimate $||vL_{\lambda}u||_{2}$ next.
Since
\begin{equation*}
||vL_{\lambda}u||_{2} \leq ||L_{\lambda}uL_{\mu}v||_{2} + ||L_{\lambda}uH_{\mu}v||_{2},
\end{equation*}
it is enough to estimate 
\begin{equation}
||L_{\lambda}uL_{\mu}v||_{2} \leq \Lambda(d, \mu)(\lambda^{-1}||(-\Delta_{g}-\lambda^{2})u||_{2} + ||u||_{2})(\mu^{-1}||(-\Delta_{g}-\mu^{2})v||_{2} + ||v||_{2}),  
\end{equation}
and
\begin{equation}
||L_{\lambda}uH_{\mu}v||_{2} \leq \Lambda(d, \mu)(\lambda^{-1}||(-\Delta_{g}-\lambda^{2})u||_{2} + ||u||_{2})(\mu^{-1}||(-\Delta_{g}-\mu^{2})v||_{2} + ||v||_{2}).   
\end{equation}
To estimate the last term, we fix a nonnegative Littlewood-Paley bump function $\beta \in C_{0}^{\infty}((1/2, 2))$ satisfying $1 = \sum_{k=-\infty}^{\infty}\beta(r/2^{k}), r>0.$ And we denote by $S_{j}f = \beta(2^{-j}P)f$, then we have
\begin{equation}
\begin{aligned}
||H_{\mu}v||_{\infty} &\leq \sum_{2^{j}\ge2\mu}||S_{j}v||_{\infty}\\
&\leq \sum_{2^{j}\ge2\mu}2^{3j/2}||S_{j}v||_{2}\\
&\leq \sum_{2^{j}\ge2\mu}2^{-j/2}||-\Delta_{g}(S_{j}v)||_{2}\\
&\leq \mu^{-1/2}\sum_{2^{j}\ge2\mu}(\mu2^{-j})^{1/2}||-\Delta_{g}(S_{j}v)||_{2}\\
&\leq \mu^{-1/2}\sum_{2^{j}\ge2\mu}(\mu2^{-j})^{1/2}||-\Delta_{g}H_{\mu}v||_{2}\\
&\leq C\mu^{-1/2}||(-\Delta_{g}-\mu^{2})v||_{2}\\
&= C\mu^{1/2}\mu^{-1}||(-\Delta_{g}-\mu^{2})v||_{2}\\
&\leq C\mu^{1/2}(\mu^{-1}||(-\Delta_{g}-\mu^{2})v||_{2} + ||v||_{2})\\
&\lesssim \Lambda(d, \mu)(\mu^{-1}||(-\Delta_{g}-\mu^{2})v||_{2} + ||v||_{2})\\
\end{aligned}
\end{equation}
Thus,
\begin{equation}
\begin{aligned}
||L_{\lambda}uH_{\mu}v||_{2} &\leq ||L_{\lambda}u||_{2}||H_{\mu}v||_{\infty}\\  
&\lesssim ||u||_{2}\mu^{1/2}(\mu^{-1}||(-\Delta_{g}-\mu^{2})v||_{2} + ||v||_{2})\\
&\leq \mu^{1/2}(\lambda^{-1}||(-\Delta_{g}-\lambda^{2})u||_{2} + ||u||_{2})(\mu^{-1}||(-\Delta_{g}-\mu^{2})v||_{2} + ||v||_{2})\\
&\leq \Lambda(d, \mu)(\lambda^{-1}||(-\Delta_{g}-\lambda^{2})u||_{2} + ||u||_{2})(\mu^{-1}||(-\Delta_{g}-\mu^{2})v||_{2} + ||v||_{2}).
\end{aligned}
\end{equation}
To estimate the remaining part $||L_{\lambda}uL_{\mu}v||_{2}$, we write
\begin{equation}
L_{\lambda}u = \sum_{k \leq 2\lambda}\chi_{k}u, \ \ \ \  L_{\mu}v = \sum_{j \leq 2\mu} \chi_{j}v,   
\end{equation}
where $\chi_{k}$ is the projection operators corresponding to the unit internal $[k, k + 1]$ and $\chi_{j}$ is the projection operators corresponding to the unit internal $[j, j + 1]$.\\
By Cauchy-Schwarz inequality, we have
\begin{equation}
|L_{\lambda}uL_{\mu}v| \leq (\sum_{k \leq 2\lambda, j \leq 2\mu}|(1+|k-\lambda|)\chi_{k}u(1+|j-\mu|)\chi_{j}v|^{2})^{1/2}  
\end{equation}
Thus,
\begin{equation}
\begin{aligned}
||L_{\lambda}u&L_{\mu}v||_{2}^{2} \leq \sum_{k \leq 2\lambda, j \leq 2\mu}(1+|k-\lambda|)^{2}(1+|j-\mu|)^{2}||\chi_{k}u\chi_{j}v||_{2}^{2}\\
&\leq \sum_{k \leq 2\lambda, j \leq 2\mu} (\Lambda(d, \mu))^{2}||(1+|\lambda-k|\chi_{k}u)||_{2}^{2}||(1+|\mu-j|)\chi_{j}v||_{2}^{2}\\
&\leq (\Lambda(d, \mu))^{2}(\sum_{n=0}^{\infty}||(1+|k-\lambda|)\chi_{k}u||_{2}^{2})(\sum_{n=0}^{\infty}||(1+|j-\mu|)\chi_{j}v||_{2}^{2})\\
& \leq (\Lambda(d, \mu))^{2}(\lambda^{-1}||(-\Delta_{g}-\lambda^{2})u||_{2} + ||u||_{2})(\mu^{-1}||(-\Delta_{g}-\mu^{2})v||_{2} + ||v||_{2}),
\end{aligned}
\end{equation}
since the estimates for spectral clusters in \cite{burq2} of dimension 3:
\begin{equation*}
||\chi_{k}u\chi_{j}v||_{2} \lesssim \Lambda(d, \min(k, j))||u||_{2}||v||_{2}.    
\end{equation*}
This completes the proof of theorem \ref{be3}. 
\end{proof}

\section{Bilinear quasimodes estimates for higher dimensions}\label{higher}

We introduce the following notation: 
given $\nu \ge 1$, we set
\begin{equation}\label{nu1}
\Lambda(d,\mu) := \mu^{{\frac{d-2}{2}}}, \ \  \text{if } d\ge 4. 
\end{equation}

\begin{theorem}\label{be4}
$(M,g)$ is a $d$ ($d = 4, 5$) dimensional compact Riemannian manifold without boundary. Let $u$ be quasimodes satisfying (\ref{delta}) with frequencies $\lambda$ and let $v$ be quasimodes satisfying (\ref{delta}) with frequencies $\mu$, respectively. Assume that $1 \leq \mu \leq \lambda$. Then the following bilinear estimates holds
\begin{equation}\label{qmr4}
||uv||_{2} \lesssim \Lambda(d, \mu)(\lambda^{-1}||(-\Delta_{g}-\lambda^{2})u||_{2} + ||u||_{2})(\mu^{-1}||(-\Delta_{g}-\mu^{2})v||_{2} + ||v||_{2}).
\end{equation}
If $d \ge 6$ we also have for such $d$
\begin{equation}\label{qmr5}
\begin{aligned}
||uv||_{2}  \lesssim &\Lambda(d, \mu)(\lambda^{-1}||(-\Delta_{g}-\lambda^{2})u||_{2} + ||u||_{2})\\
&(\mu^{-1}||(-\Delta_{g}-\mu^{2})v||_{2} + ||v||_{2} + \mu^{-N+d/2-\sigma(q)}||(I-\Delta)^{N/2}R_{\mu}v||_{2}).
\end{aligned}
\end{equation}
assuming that $N > d/2$ with $R_{\lambda}$ being the projection operator for $P = \sqrt{-\Delta_{g}}$ corresponding to the internal $[2\lambda, \infty)$. Furthermore, for $d = 6, 7$, we can get a better result than the result of (\ref{qmr5}) if $\lambda = \mu$ as follows:
\begin{equation}\label{qmr051}
||u||_{4}^{2} \lesssim \Lambda(d, \lambda)(\lambda^{-1}||(-\Delta_{g}-\lambda^{2})u||_{2} + ||u||_{2})^{2}.
\end{equation}
\end{theorem}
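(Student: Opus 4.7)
The plan is to mirror the low/high frequency splitting used in Theorems \ref{be2} and \ref{be3}: decompose $u = L_\lambda u + H_\lambda u$ and $v = L_\mu v + H_\mu v$ with $L_\lambda = \psi(P/\lambda)$ and $L_\mu = \psi(P/\mu)$, and estimate each of the four pieces of $uv$ separately. For the main piece $L_\lambda u \cdot L_\mu v$, I would expand each factor into unit-width spectral bands $\chi_k u$, $\chi_j v$, apply Cauchy-Schwarz with weights $(1+|k-\lambda|)(1+|j-\mu|)$ (exactly as in the proof of Theorem \ref{be3}), and invoke the higher-dimensional bilinear spectral cluster estimate of \cite{burq3, burq4}, $||\chi_k u\, \chi_j v||_2 \lesssim \Lambda(d, \min(k,j)) ||\chi_k u||_2 ||\chi_j v||_2$. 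Since the summation is restricted to $k \leq 2\lambda$, $j \leq 2\mu$, one has $\min(k,j) \leq 2\mu$ and the coefficient collapses to $\Lambda(d, \mu)$.

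\textbf{Cross and double-high pieces for $d = 4, 5$.} For $H_\lambda u \cdot v$, Hölder provides $L^4 \cdot L^4 \to L^2$. To bound $||H_\lambda u||_4$, I would Littlewood-Paley decompose $H_\lambda u = \sum_{2^j \geq 2\lambda} S_j u$, apply Sogge's spectral cluster estimate $||S_j u||_4 \lesssim 2^{j(d-2)/4} ||S_j u||_2$ on each dyadic block, and combine with $||S_j u||_2 \lesssim 2^{-2j} ||(-\Delta_g - \lambda^2) u||_2$ (valid for $2^j \geq 2\lambda$, since then $P^2 - \lambda^2 \sim P^2$). The resulting geometric series in $2^{j(d-10)/4}$ converges for $d \leq 9$ and yields $||H_\lambda u||_4 \lesssim \lambda^{(d-6)/4}(\lambda^{-1}||(-\Delta_g - \lambda^2)u||_2 + ||u||_2)$. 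Pairing with Sogge-Zelditch $||v||_4 \lesssim \mu^{(d-2)/4}(\mu^{-1}||(-\Delta_g - \mu^2)v||_2 + ||v||_2)$ and using $\mu \leq \lambda$ together with $(d-6)/4 \leq -1/4$ when $d \leq 5$, the prefactor is dominated by $\mu^{(d-4)/2} \leq \Lambda(d, \mu)$. The pieces $L_\lambda u \cdot H_\mu v$ and $H_\lambda u \cdot H_\mu v$ are handled symmetrically.

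\textbf{Dimensions $d \geq 6$ and the improvement for $\lambda = \mu$, $d = 6, 7$.} For $d \geq 6$ the exponent $(d-6)/4$ is nonnegative and the previous argument no longer closes in terms of $\mu$ alone. I would instead invoke the assumed Sobolev-regular quantity $||(I - \Delta_g)^{N/2} R_\mu v||_2$ with $N > d/2$: since $R_\mu v$ is spectrally localized to $[2\mu, \infty)$, the spectral multiplier estimate on $\tau \geq 2\mu$ yields $||R_\mu v||_q \lesssim \mu^{-N + d/2 - \sigma(q)}||(I - \Delta_g)^{N/2} R_\mu v||_2$ for the appropriate Hölder exponent $q$, producing exactly the new term in \eqref{qmr5}. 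For the improvement \eqref{qmr051}, one simply squares the Sogge-Zelditch $L^4$ quasimode estimate of \cite{sogge6}, which holds without remainder for $4 \leq d \leq 7$ with $\sigma(4) = (d-2)/4$; no bilinear argument is needed.

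\textbf{Main obstacle.} The delicate step is the cross piece $L_\lambda u \cdot H_\mu v$ in dimensions $d \geq 6$: the quasimode control $(-\Delta_g - \mu^2)v \in L^2$ supplies only two derivatives, while converting the spectral localization of $H_\mu v$ into a usable $L^q$ bound requires more than $d/2$ of them. The additional term $\mu^{-N + d/2 - \sigma(q)} ||(I - \Delta_g)^{N/2} R_\mu v||_2$ in \eqref{qmr5} is engineered precisely to close this gap, and the main calculation to carry out is the careful bookkeeping that the powers of $\mu$ coming from the multiplier estimate, Sogge's $\sigma(q)$, and the order $N$ match so that after Hölder and summation one still recovers the leading factor $\Lambda(d, \mu)$.
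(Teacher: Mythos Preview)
Your overall architecture---low/high splitting of both factors, handling the main piece $L_\lambda u\cdot L_\mu v$ by unit-band decomposition, Cauchy--Schwarz with weights $(1+|k-\lambda|)(1+|j-\mu|)$, and the bilinear cluster bounds of \cite{burq3,burq4}---is exactly what the paper does. There are, however, two substantive discrepancies.

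\textbf{The piece $vH_\lambda u$ and the true source of the $R_\mu$-term.} Your $L^4\times L^4$ treatment, with $\|H_\lambda u\|_4$ obtained via Littlewood--Paley plus Sogge's cluster bound, is a valid alternative for $d=4,5$. The paper proceeds differently: it uses the Sobolev embedding $(1-\Delta_g)^{-\frac d2(\frac12-\frac1p)}\colon L^2\to L^p$ directly to obtain $\|H_\lambda u\|_p\lesssim\lambda^{d(\frac12-\frac1p)-1}(\lambda^{-1}\|(-\Delta_g-\lambda^2)u\|_2+\|u\|_2)$ for any $p\le 2d/(d-2)$, then pairs with $\|v\|_q$, $q=2p/(p-2)\ge d$, via Theorem~1.3 of \cite{sogge3} (case $V\equiv0$). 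For $d=4,5$ one has $q<2d/(d-4)$ and the clean bound applies; for $d\ge 6$ the constraint $p\le 2d/(d-2)$ forces $q>2d/(d-4)$, and it is \emph{this} $\|v\|_q$ estimate that brings in the extra term $\mu^{-N+d/2-\sigma(q)}\|(I-\Delta_g)^{N/2}R_\mu v\|_2$ of \eqref{qmr5}. So the $R_\mu$-term in the paper arises from the $vH_\lambda u$ piece through \cite{sogge3}, not from the cross piece $L_\lambda u\cdot H_\mu v$ as your ``main obstacle'' paragraph asserts.

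\textbf{The cross piece $L_\lambda u\cdot H_\mu v$.} ``Handled symmetrically'' does not close. If you mean $L^4\times L^4$, the factor $\|L_\lambda u\|_4$ costs $\lambda^{(d-2)/4}$, and $\lambda^{(d-2)/4}\mu^{(d-6)/4}$ is not dominated by $\Lambda(d,\mu)=\mu^{(d-2)/2}$ once $\lambda\gg\mu$. The asymmetry $\mu\le\lambda$ is essential here: the paper puts the low-frequency factor in $L^2$ (so no $\lambda$-power appears at all) and the high-frequency tail of the \emph{smaller}-frequency quasimode in $L^\infty$, estimating $\|H_\mu v\|_\infty$ by a Littlewood--Paley argument. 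You should replace ``symmetrically'' by this $L^2\times L^\infty$ step.

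Your shortcut for \eqref{qmr051}---just square the Sogge--Zelditch $L^4$ quasimode bound, valid without remainder for $d\le 7$---is correct. The paper instead stays inside the bilinear framework and observes that when $\lambda=\mu$ one may relax to $p\le 2d/(d-4)$, which keeps $q<2d/(d-4)$ for $d=6,7$ and avoids the $R_\mu$-term; the two arguments are equivalent in outcome.
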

To prove this theorem, we need use a special case of Theorem 1.3 in \cite{sogge3} corresponding to $V \equiv 0$.

\begin{proof}
As before,
\begin{equation}
||uv||_{2} \leq ||vL_{\lambda}u||_{2} + ||vH_{\lambda}u||_{2},  
\end{equation}
it is enough to prove that for $d = 4, 5$
\begin{equation}
||vL_{\lambda}u||_{2} \lesssim \Lambda(d, \mu)(\lambda^{-1}||(-\Delta_{g}-\lambda^{2})u||_{2} + ||u||_{2})(\mu^{-1}||(-\Delta_{g}-\mu^{2})v||_{2} + ||v||_{2}), 
\end{equation}
\\and
\begin{equation}
||vH_{\lambda}u||_{2} \lesssim \Lambda(d, \mu)(\lambda^{-1}||(-\Delta_{g}-\lambda^{2})u||_{2} + ||u||_{2})(\mu^{-1}||(-\Delta_{g}-\mu^{2})v||_{2} + ||v||_{2}). 
\end{equation}
To estimate the high frequency part, the assumptions on $p \leq \frac{2d}{d-4}$ for $d$ then ensure that
\begin{equation}
 (1 - \Delta_{g})^{-\frac{d}{2}(\frac{1}{2}-\frac{1}{p})} : L^{2} \rightarrow L^{p} \ \ \and \ \ \frac{d}{2}(\frac{1}{2}-\frac{1}{p}) \leq 1.  
\end{equation}
Thus 
\begin{equation}\label{Hu10}
\begin{aligned}
 ||H_{\lambda}u||_{p} &\lesssim ||P^{d(\frac{1}{2}-\frac{1}{p})}H_{\lambda}u||_{2}\\
 &\lesssim \lambda^{d(\frac{1}{2}-\frac{1}{p})-2}||(-\Delta_{g}-\lambda^{2})H_{\lambda}u||_{2}\\
 &\leq \lambda^{d(\frac{1}{2}-\frac{1}{p})-2}||(-\Delta_{g}-\lambda^{2})u||_{2} \leq \lambda^{d(\frac{1}{2}-\frac{1}{p})-1}(\lambda^{-1}||(-\Delta_{g}-\lambda^{2})u||_{2} + ||u||_{2}).
 \end{aligned}
\end{equation}
If we combine (\ref{Hu10}) and the $L^{p}$ estimates of Theorem 1.3 in \cite{sogge3} corresponding to $V \equiv 0$ when $d = 4, 5$, we have
\begin{equation}\label{vhu4}
\begin{aligned}
||v&H_{\lambda}u||_{2} \leq ||H_{\lambda}u||_{p}||v||_{q}\\ 
&\leq \lambda^{d(\frac{1}{2}-\frac{1}{p})-1}(\lambda^{-1}||(-\Delta_{g}-\lambda^{2})u||_{2} + ||u||_{2})\mu^{\sigma(q)}(\mu^{-1}||(-\Delta_{g}-\mu^{2})v||_{2} + ||v||_{2})\\
&\leq \mu^{\frac{d-2}{2}}(\lambda^{-1}||(-\Delta_{g}-\lambda^{2})u||_{2} + ||u||_{2})(\mu^{-1}||(-\Delta_{g}-\mu^{2})v||_{2} + ||v||_{2}).
\end{aligned}
\end{equation}
Since we want to make the highest frequency disappear from the right hand in the last inequality of (\ref{vhu4}), we need ensure that the power of $\lambda$ is negative. We need assume that $d(\frac{1}{2}-\frac{1}{p})-1 \leq 0$, that is, $p \leq \frac{2d}{d-2}$. We have $q = \frac{2p}{p-2}$ due to Holder inequality in the first inequality of (\ref{vhu4}). Therefore, we have $q < \frac{2d}{d-4}$ when $d = 4, 5$. Then we can estimate $||v||_{q}$ in the second inequality of (\ref{vhu4}). We also use the fact that $\lambda^{d(\frac{1}{2}-\frac{1}{p})-1} \mu^{\sigma(q)} \leq \mu^{\frac{d-2}{2}}$ in the last inequality of (\ref{vhu4}). Indeed, when $q \leq \frac{2(d+1)}{d-1}$, $\sigma(q) = \frac{d-1}{2}(\frac{1}{2}-\frac{1}{q})$. Then we have
$d(\frac{1}{2}-\frac{1}{p})-1 + \sigma(q) = d(\frac{1}{2}-\frac{1}{p})-1 + \frac{d-1}{2}(\frac{1}{2}-\frac{1}{q}) \leq \frac{d-2}{2}.$ When $q \ge \frac{2(d+1)}{d-1}$, $\sigma(q) = d(\frac{1}{2}-\frac{1}{q}) - \frac{1}{2}$. Then we have
$d(\frac{1}{2}-\frac{1}{p})-1 + \sigma(q) = d(\frac{1}{2}-\frac{1}{p})-1 + d(\frac{1}{2}-\frac{1}{q}) - \frac{1}{2} = \frac{d-3}{2} < \frac{d-2}{2}.$

So we need to estimate $||vL_{\lambda}u||_{2}$ next.
And we have
\begin{equation}\label{vlu4}
||vL_{\lambda}u||_{2} \leq ||L_{\lambda}uL_{\mu}v||_{2} + ||L_{\lambda}uH_{\mu}v||_{2}.
\end{equation}

To estimate the last term in (\ref{vlu4}), we fix a nonnegative Littlewood-Paley bump function $\beta \in C_{0}^{\infty}((1/2, 2))$ satisfying $1 = \sum_{k=-\infty}^{\infty}\beta(r/2^{k}), r>0.$ And we denote by $S_{j}f = \beta(2^{-j}P)f$, then we have
\begin{equation}
\begin{aligned}
||H_{\mu}v||_{\infty} &\leq \sum_{2^{j}\ge2\mu}||S_{j}v||_{\infty}\\
&\leq \sum_{2^{j}\ge2\mu}2^{dj/2}||S_{j}v||_{2}\\
&\leq \sum_{2^{j}\ge2\mu}2^{(d-4)j/2}||-\Delta_{g}(S_{j}v)||_{2}\\
&\leq \mu^{(d-4)/2}\sum_{2^{j}\ge2\mu}(\mu2^{-j})^{(4-d)/2}||-\Delta_{g}(S_{j}v)||_{2}\\
&\leq \mu^{(d-4)/2}\sum_{2^{j}\ge2\mu}(\mu2^{-j})^{(4-d)/2}||-\Delta_{g}H_{\mu}v||_{2}\\
&\leq C\mu^{(d-4)/2}||(-\Delta_{g}-\mu^{2})v||_{2}\\
&= C\mu^{(d-2)/2}\mu^{-1}||(-\Delta_{g}-\mu^{2})v||_{2}\\
&\leq C\mu^{(d-2)/2}(\mu^{-1}||(-\Delta_{g}-\mu^{2})v||_{2} + ||v||_{2})\\
&\lesssim \Lambda(d, \mu)(\mu^{-1}||(-\Delta_{g}-\mu^{2})v||_{2} + ||v||_{2})\\
\end{aligned}
\end{equation}
Thus,
\begin{equation}\label{411}
\begin{aligned}
 ||L_{\lambda}uH_{\mu}v||_{2} &\leq ||L_{\lambda}u||_{2}||H_{\mu}v||_{\infty}\\ 
 &\lesssim ||u||_{2} \Lambda(d, \mu)(\mu^{-1}||(-\Delta_{g}-\mu^{2})v||_{2} + ||v||_{2})\\
 &\leq \Lambda(d, \mu)(\lambda^{-1}||(-\Delta_{g}-\lambda^{2})u||_{2} + ||u||_{2})(\mu^{-1}||(-\Delta_{g}-\mu^{2})v||_{2} + ||v||_{2}).
\end{aligned}
\end{equation}
To prove the remaining part of estimates, we write
\begin{equation}
L_{\lambda}u = \sum_{k \leq 2\lambda}\chi_{k}u, \ \ \  L_{\mu}v = \sum_{j \leq 2\mu} \chi_{j}v,    
\end{equation}
where $\chi_{k}$ is the projection operators corresponding to the unit internal $[k, k + 1]$ and $\chi_{j}$ is the projection operators corresponding to the unit internal $[j, j + 1]$.

By Cauchy-Schwarz inequality, we have
\begin{equation}
|L_{\lambda}uL_{\mu}v| \leq (\sum_{k \leq 2\lambda, j \leq 2\mu}|(1+|k-\lambda|)\chi_{k}u(1+|j-\mu|)\chi_{j}v|^{2})^{1/2}    
\end{equation}
Thus,
\begin{equation}\label{414}
\begin{aligned}
||L_{\lambda}u&L_{\mu}v||_{2}^{2} \leq \sum_{k \leq 2\lambda, j \leq 2\mu}(1+|k-\lambda|)^{2}(1+|j-\mu|)^{2}||\chi_{k}u\chi_{j}v||_{2}^{2}\\
&\leq \sum_{k \leq 2\lambda, j \leq 2\mu} (\min(j,k))^{(d-2)}||(1+|\lambda-k|\chi_{k}u)||_{2}^{2}||(1+|\mu-j|)\chi_{j}v||_{2}^{2}\\
&\leq (\Lambda(d, \mu))^{2}(\sum_{n=0}^{\infty}||(1+|k-\lambda|)\chi_{k}u||_{2}^{2})(\sum_{n=0}^{\infty}||(1+|j-\mu|)\chi_{j}v||_{2}^{2})\\
& \leq (\Lambda(d, \mu))^{2}(\lambda^{-1}||(-\Delta_{g}-\lambda^{2})u||_{2} + ||u||_{2})^{2}(\mu^{-1}||(-\Delta_{g}-\mu^{2})v||_{2} + ||v||_{2})^{2},
\end{aligned}
\end{equation}
since the estimates for spectral clusters in \cite{burq3, burq4}:
\begin{equation*}
||\chi_{\lambda}u\chi_{\mu}v||_{2} \lesssim \Lambda(d, \min(\lambda, \mu))||u||_{2}||v||_{2}.    
\end{equation*}
This completes the proof of (\ref{qmr4}).

For the remaining case, we note that $q = \frac{2p}{p-2} > \frac{2d}{d-4}$ when $d \ge 6$ since $p \leq \frac{2d}{d-2}$. Then according to Theorem 1.3 in \cite{sogge3} corresponding to $V \equiv 0$, we have the estimates of $||v||_{q}$ in the right side of (\ref{vhu4}).
\begin{equation}\label{vq}
 ||v||_{q}  \lesssim \mu^{\sigma^(q)} (\mu^{-1}||(-\Delta_{g}-\mu^{2})v||_{2} + ||v||_{2} + \mu^{-N+d/2-\sigma(q)}||(I-\Delta_{g})^{N/2}R_{\mu}v||_{2}),   
\end{equation}
assuming that $N > d/2$ with $R_{\lambda}$ being the projection operator for $P = \sqrt{-\Delta_{g}}$ corresponding to the internal $[2\lambda, \infty)$. 
Thus, when $d \ge 6$ we have
\begin{equation}\label{vhu5}
\begin{aligned}
||vH_{\lambda}u||_{2} \leq& ||H_{\lambda}u||_{p}||v||_{q}\\ 
\leq &\lambda^{d(\frac{1}{2}-\frac{1}{p})-1}(\lambda^{-1}||(-\Delta_{g}-\lambda^{2})u||_{2} + ||u||_{2})\\
&\mu^{\sigma(q)}(\mu^{-1}||(-\Delta_{g}-\mu^{2})v||_{2} + ||v||_{2} + \mu^{-N+d/2-\sigma(q)}||(I-\Delta_{g})^{N/2}R_{\mu}v||_{2})\\
< &\mu^{\frac{d-2}{2}}(\lambda^{-1}||(-\Delta_{g}-\lambda^{2})u||_{2} + ||u||_{2})\\
&(\mu^{-1}||(-\Delta_{g}-\mu^{2})v||_{2} + ||v||_{2} + \mu^{-N+d/2-\sigma(q)}||(I-\Delta_{g})^{N/2}R_{\mu}v||_{2}),
\end{aligned}
\end{equation}
This along with (\ref{411}) and (\ref{414}) finish the proof of the bilinear estimates of quasimodes when $d \ge 6$.

Furthermore, if $\lambda = \mu$, we just need assume that $\frac{d}{2}(\frac{1}{2}-\frac{1}{p}) \leq 1$, that is, $p \leq \frac{2d}{d-4}$ to make the second inequality of (\ref{Hu10}) hold. Then we have $q = \frac{2p}{p-2}$ due to Holder inequality and  $q< \frac{2d}{d-4}$ when $d =6, 7$.  Thus if $\lambda = \mu$ for $d = 6, 7$, we can get a better result than the result of (\ref{vhu5}) due to the $L^{p}$ estimates of Theorem 1.3 in \cite{sogge3} corresponding to $V \equiv 0$ as follows:
\begin{equation}\label{vhu0005}
\begin{aligned}
||uH_{\lambda}u||_{2} \leq
\lambda^{\frac{d-2}{2}}(\lambda^{-1}||(-\Delta_{g}-\lambda^{2})u||_{2} + ||u||_{2})^{2}.
\end{aligned}
\end{equation}
Then along with (\ref{411}) and (\ref{414}), we have 

\begin{equation}\label{qmr005}
||u||_{4}^{2} \lesssim \Lambda(d, \lambda)(\lambda^{-1}||(-\Delta_{g}-\lambda^{2})u||_{2} + ||u||_{2})^{2}.
\end{equation}
If $\lambda = \mu$ for $d \ge 8$, we just need assume that $\frac{d}{2}(\frac{1}{2}-\frac{1}{p}) \leq 1$, that is, $p \leq \frac{2d}{d-4}$ to make the second inequality of (\ref{Hu10}) hold. Then we have
$q = \frac{2p}{p-2}$ due to Holder inequality and $q> \frac{2d}{d-4}$ for $d \ge 8$. Thus the result of $\lambda = \mu$ is the same as the result of $\lambda > \mu$ for $d \ge 8$ combining (\ref{vhu5}), (\ref{411}),  (\ref{414}) and the $L^{p}$ estimates of Theorem 1.3 in \cite{sogge3} corresponding to $V \equiv 0$. 

This finishes the proof of Theorem \ref{be4}.

\end{proof}

\section{Proof of approximation in $H^{-1}$}\label{res}

Proof. Under the above hypothesis, we claim that 
\begin{equation}\label{2.4}
||\mathcal{R_{\nu}}(u_{i}u_{j})||_{H^{-1}} \leq C\lambda_{\nu}^{-1}\Lambda(d, \min(\lambda_{m}, \lambda_{n})) \approx \nu^{-1/d}\Lambda(d, \min(\lambda_{m}, \lambda_{n})),
\end{equation}
the above is argued as in \cite{sogge2}.\\
Since
\begin{align*}
\nu^{-1/d} \Lambda(d, \min(\lambda_{m}, \lambda_{n})) < \varepsilon \Leftrightarrow \nu > \varepsilon^{-d} \Omega(d, \min(\lambda_{m}, \lambda_{n})).
\end{align*}
where $\mu(d)$, as in (\ref{2.3}), we conclude that in order to obtain (\ref{2.2}), we just need to prove (\ref{2.4}). To prove this we use Theorem \ref{be2}, Theorem \ref{be3} and Theorem \ref{be4} when $2 \leq d \leq 5$ to get 
\begin{equation*}
\begin{aligned}
||\mathcal{R_{\nu}}(u_{i}u_{j})||_{H^{-1}}^{2} &\leq C\lambda_{\nu}^{-2}\sum_{k>\nu}|<u_{i}u_{j},e_{k}>|^{2}\\
& \leq \lambda_{\nu}^{-2}||u_{i}u_{j}||_{2}^{2}\\ &\leq \lambda_{\nu}^{-2} \Lambda(d, \min(\lambda_{i}, \lambda_{j}))^{2}(\lambda_{i}^{-1}||(\Delta_{g} +   \lambda_{i}^{2})u||_{L_{2}}+||u||_{L_{2}})^{2}\\
&(\lambda_{j}^{-1}||(\Delta_{g} + \lambda_{j}^{2})v||_{L_{2}}+||v||_{L_{2}})^{2}\\
& \lesssim \lambda_{\nu}^{-2}\Lambda(d, \min(\lambda_{i}, \lambda_{j}))^{2}.
\end{aligned}
\end{equation*}
And we use Theorem \ref{be4} when $d \ge 6$ to get 
\begin{equation*}
\begin{aligned}
||\mathcal{R_{\nu}}(u_{i}u_{j})||_{H^{-1}}^{2} &\leq C\lambda_{\nu}^{-2}\sum_{k>\nu}|<u_{i}u_{j},e_{k}>|^{2}\\
& \leq \lambda_{\nu}^{-2}||u_{i}u_{j}||_{2}^{2}\\ &\leq \lambda_{\nu}^{-2} \Lambda(d, \min(\lambda_{i}, \lambda_{j}))^{2}(\lambda_{i}^{-1}||(\Delta_{g} +   \lambda_{i}^{2})u||_{L_{2}}+||u||_{L_{2}})^{2}\\
&(\lambda_{j}^{-1}||(\Delta_{g} + \lambda_{j}^{2})v||_{L_{2}}+||v||_{L_{2}} + \mu^{-N+d/2-\sigma(q)}||(I-\Delta_{g})^{N/2}R_{\mu}v||_{2})^{2}\\
& \lesssim \lambda_{\nu}^{-2}\Lambda(d, \min(\lambda_{i}, \lambda_{j}))^{2},
\end{aligned}
\end{equation*}
which are desired.

\section{Approximation of products for quasimodes}
The following result says that, if, as above, $i,j \leq n$ then the product of quasimodes $u_{i}u_{j}$ can be well approximated by elements of $B_{\nu}$ if $\nu$ is not much larger than $n$. In this section, we improve results in Lu-Steinerberger (\cite{lu}), since we can handle quasimodes.
\begin{theorem}\label{fix}
Fix $(M,g)$ as above. Then there is a $\sigma = \sigma_{d}$ so that if $\kappa = 1/d$ there is a uniform constant $C_{\kappa}$ such that if $i,j \leq n <\nu$ and $n \in \mathbb{N}$ we have
are desired.
\begin{equation}\label{Ru}
||\mathcal{R}_{\nu}(u_{i}u_{j})||_{L^{2}(M)} \leq Cn^{\sigma}(n/\nu)^{\kappa} 
\end{equation}
\end{theorem}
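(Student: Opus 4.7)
The plan is to upgrade the $H^{-1}$ argument already established in (\ref{2.4}) to an $L^{2}$ estimate by trading one derivative against the spectral gap. For every index $k>\nu$ one has $\lambda_{k}\ge\lambda_{\nu+1}$, so Parseval combined with the trivial inequality $(\lambda_{k}/\lambda_{\nu+1})^{2}\ge 1$ gives
\[
\|\mathcal R_{\nu}f\|_{L^{2}}^{2}\;\le\;\lambda_{\nu+1}^{-2}\sum_{k>\nu}\lambda_{k}^{2}\,|\langle f,e_{k}\rangle|^{2}\;\le\;\lambda_{\nu+1}^{-2}\|\nabla f\|_{L^{2}}^{2}
\]
for every $f\in H^{1}(M)$. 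By Weyl's law $\lambda_{\nu+1}\gtrsim\nu^{1/d}$, and because the hypothesis $i,j\le n$ forces $\lambda_{i},\lambda_{j}\le\lambda_{n}\lesssim n^{1/d}$, the prefactor $\lambda_{\nu+1}^{-1}$ produces exactly the decay $(n/\nu)^{1/d}\cdot n^{-1/d}$; the residual $n^{-1/d}$ will be absorbed into the polynomial factor $n^{\sigma}$ coming from the $H^{1}$-norm of $u_{i}u_{j}$.

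It remains to bound $\|\nabla(u_{i}u_{j})\|_{L^{2}}$ by a polynomial in $n^{1/d}$. I use the product rule and H\"older at exponent $4$,
\[
\|\nabla(u_{i}u_{j})\|_{L^{2}}\;\le\;\|\nabla u_{i}\|_{L^{4}}\|u_{j}\|_{L^{4}}+\|u_{i}\|_{L^{4}}\|\nabla u_{j}\|_{L^{4}}.
\]
The factors $\|u_{i}\|_{L^{4}},\|u_{j}\|_{L^{4}}$ are controlled by the Sogge-Zelditch quasimode estimate (\ref{u1}), giving $\|u\|_{L^{4}}\lesssim\lambda^{\sigma(4)}$ for $d\le 7$, and by the improved $L^{4}$ bound (\ref{qmr00})--(\ref{qmr005}) of Theorem \ref{be4} for $d\ge 8$. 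For $\|\nabla u\|_{L^{4}}$ I decompose $u=L_{2\lambda}u+H_{2\lambda}u$ as in the proofs of Theorems \ref{be2}--\ref{be4}: on the low-frequency piece a spectral-multiplier Bernstein inequality yields $\|\nabla L_{2\lambda}u\|_{L^{4}}\lesssim\lambda\|L_{2\lambda}u\|_{L^{4}}\lesssim\lambda^{1+\sigma(4)}$, while on the high-frequency piece the Riesz-transform identity $\nabla=(\nabla P^{-1})P$ reduces the bound to $\|PH_{2\lambda}u\|_{L^{4}}$, which is estimated by the same dyadic argument (together with $\|(\Delta+\lambda^{2})u\|_{2}\le C\lambda$) that appears in (\ref{Hu}), (\ref{Hu3}), (\ref{Hu10}). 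The net conclusion is $\|\nabla u\|_{L^{4}}\lesssim\lambda^{1+\sigma(4)}$.

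Assembling these ingredients with $\lambda_{i},\lambda_{j}\le\lambda_{n}\lesssim n^{1/d}$ yields $\|\nabla(u_{i}u_{j})\|_{L^{2}}\lesssim n^{(1+2\sigma(4))/d}$, and therefore
\[
\|\mathcal R_{\nu}(u_{i}u_{j})\|_{L^{2}}\;\lesssim\;\lambda_{\nu+1}^{-1}\,n^{(1+2\sigma(4))/d}\;\lesssim\;n^{2\sigma(4)/d}\,(n/\nu)^{1/d},
\]
which is the desired estimate with $\kappa=1/d$ and $\sigma_{d}=2\sigma(4)/d$. The main technical obstacle is the gradient $L^{4}$-bound in dimensions $d\ge 6$: the naive dyadic summation on the high-frequency piece diverges, reflecting the failure of the $L^{p}$ quasimode estimates past $p=2d/(d-4)$ emphasized in the introduction. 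Exactly as in the $d\ge 6$ case of Theorem \ref{be4}, this is repaired by invoking the refined Sogge-Zelditch/Koch-Tataru-Zworski estimate together with the $R_{\lambda}u$ correction term, at the cost of a case split by dimension; in every case only the exponent $\sigma(4)$ enters, so the resulting $\sigma_{d}$ depends on $d$ in an explicit and uniform way.
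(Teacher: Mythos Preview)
Your proof is correct and follows the same three-step route as the paper: bound $\|\mathcal R_\nu(u_iu_j)\|_{L^2}$ by $\lambda_\nu^{-1}\|u_iu_j\|_{H^1}$, estimate the $H^1$-norm of the product by $\lambda_n^{1+2\sigma(4)}$ via Leibniz and H\"older at exponent $4$, and invoke Weyl's law to reach $\sigma_d=2\sigma(4)/d$. The only cosmetic difference is that the paper isolates the $H^1$ bound as Lemma~\ref{for} and simply cites \cite{sogge1}, \cite{sogge3}, \cite{ktz} for $\|L_k u\|_{L^4}\lesssim\lambda^{m_k+\sigma(4)}$, whereas you re-derive the gradient $L^4$-bound by hand using the low/high decomposition from Theorems~\ref{be2}--\ref{be4}.
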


\begin{lemma}\label{for}
For $\sigma \in \mathbb{R}$, let $||f||_{H^{\sigma}(M)} = ||(I - \Delta_{g})^{\sigma/2}f||_{L^{2}(M)}$ denote the norm for the Sobolev space of the order $\sigma$ on $M$. If $n \in \mathbb{N}$ and if $1 \leq i \leq j \leq n$, then 
\begin{equation}\label{H}
||u_{i}u_{j}||_{H^{1}(M)} \leq C_{\mu,M}\lambda_{n}^{1+2\sigma(4)},
\end{equation}
if, for $p \ge 2$ we set
\begin{equation}\label{sigma}
\sigma(p) = \max\{d(\frac{1}{2} - \frac{1}{p}) - \frac{1}{2}, d(\frac{d-1}{2}(\frac{1}{2} - \frac{1}{p})\}.
\end{equation}
\end{lemma}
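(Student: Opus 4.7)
My plan is to split $||u_iu_j||_{H^1}$ into its $L^2$ piece and its gradient $L^2$ piece via the equivalence $||f||_{H^1(M)}^2 \lesssim ||f||_{L^2}^2 + ||\nabla f||_{L^2}^2$, and then bound each factor by combining H\"older's inequality with the $L^4$ quasimode estimates from \cite{sogge3} (using the same $\sigma(4)$ that appears in (\ref{sigma})). The $L^2$ piece is immediate from H\"older and $||u_k||_{L^4}\lesssim\lambda_k^{\sigma(4)}$:
$$||u_iu_j||_{L^2} \leq ||u_i||_{L^4}||u_j||_{L^4} \lesssim \lambda_i^{\sigma(4)}\lambda_j^{\sigma(4)} \leq \lambda_n^{2\sigma(4)} \leq \lambda_n^{1+2\sigma(4)},$$
so the weight of the argument falls on $||\nabla(u_iu_j)||_{L^2}$. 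The product rule and H\"older reduce this to
$$||\nabla(u_iu_j)||_{L^2} \leq ||u_i||_{L^4}||\nabla u_j||_{L^4} + ||u_j||_{L^4}||\nabla u_i||_{L^4},$$
so the argument closes as soon as one proves the claim $||\nabla u_k||_{L^4} \lesssim \lambda_k^{1+\sigma(4)}$ for every quasimode $u_k$.

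To prove this claim, I would use the $L^4$-boundedness of the Riesz transform $\nabla P^{-1}$ on the closed manifold $M$ to replace $\nabla$ by $P=\sqrt{-\Delta_g}$, and then split $u_k = L_{\lambda_k}u_k + H_{\lambda_k}u_k$ exactly as in the proofs of Theorems \ref{be2}--\ref{be4}. On the low-frequency piece, $PL_{\lambda_k} = \lambda_k\,\tilde\psi(P/\lambda_k)$ with $\tilde\psi(r)=r\psi(r)$ smooth and compactly supported, so Sogge's unit-band $L^4$ cluster bounds (in the spectral-multiplier form) yield $||PL_{\lambda_k}u_k||_{L^4} \lesssim \lambda_k^{1+\sigma(4)}||u_k||_{L^2}$. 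On the high-frequency piece, the inequality $P^2-\lambda_k^2 \geq \frac{3}{4}P^2$ on $\{P\geq 2\lambda_k\}$ combined with the quasimode hypothesis gives $||P^2H_{\lambda_k}u_k||_{L^2} \lesssim \lambda_k$, and the Sobolev-plus-spectral-calculus computation that produced the high-frequency $L^\infty$ bounds in the proofs of Theorems \ref{be2}--\ref{be4} then delivers an $L^4$ estimate of order at most $\lambda_k^{1+\sigma(4)}$. Plugging back in yields $||\nabla(u_iu_j)||_{L^2} \lesssim \lambda_n^{\sigma(4)}\cdot\lambda_n^{1+\sigma(4)} = \lambda_n^{1+2\sigma(4)}$, which closes the $H^1$ bound.

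The main obstacle is precisely this $L^4$ gradient estimate: the quasimode condition only controls $(-\Delta_g-\lambda_k^2)u_k$ in $L^2$, so for large $d$ (where Sobolev embedding into $L^4$ requires more than two derivatives on the high-frequency piece) one must either invoke the tacit regularity assumption $||(I-\Delta_g)^{N/2}R_{\lambda_k}u_k||_{L^2}<\infty$ already used in Theorem \ref{be4}, or a more careful spectral-multiplier argument that works directly from the $L^2$-control of $P^2H_{\lambda_k}u_k$. Once this step is in hand, everything else is a routine combination of H\"older's inequality with the $L^4$ quasimode bounds of \cite{sogge3}.
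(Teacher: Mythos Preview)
Your proposal is correct and follows essentially the same route as the paper: reduce $\|u_iu_j\|_{H^1}$ via the product (Leibniz) rule and H\"older's inequality to $L^4$ bounds on first-order derivatives of the quasimodes, and then invoke the estimate $\|L_k u_{j_k}\|_{L^4}\lesssim \lambda_n^{m_k+\sigma(4)}$. The only cosmetic difference is that the paper works in local coordinates with a partition of unity and simply cites \cite{sogge1,sogge3} for the key inequality $\|L_k u_{j_k}\|_{L^4}\lesssim \lambda_n^{m_k+\sigma(4)}(\lambda^{-1}\|(\Delta_g+\lambda^2)u_{j_k}\|_{L^2}+\|u_{j_k}\|_{L^2})$, whereas you work globally via the Riesz transform and sketch the low/high-frequency argument for that bound yourself.
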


These bounds arise naturally from the estimates established by Koch-Tataru-Zworski \cite{ktz} saying that if $p \ge 2$ then for $j \ge 1$
 we have
\begin{equation}\label{Lp}
||u_{j}||_{L^{p}(M)} \lesssim \lambda_{j}^{\sigma(p)},
\end{equation}
with $\sigma(p)$ being as in \ref{sigma}.

\section{Proofs}
\subsection{Proof of Theorem \ref{fix}.}
\begin{proof}[Proof.]
To prove the $L^{2}$-estimate (\ref{Ru}) we note that 
\begin{equation}\label{Rnu}
||\mathcal{R}_{\nu}h||_{L^{2}}^{2} = \sum_{k>\nu}|<h,e_{k}>|^{2} \leq \lambda_{\nu}^{-2}\sum_{k>\nu}\lambda_{k}^{2}|<h,e_{k}>|^{2} \leq \lambda_{\nu}^{-2}||h||_{H^{1}}^{2}.
\end{equation}
If we take $h=u_{i}u_{j}$ and use this along with (\ref{H}) we conclude that 
\begin{equation}\label{Ruu}
||\mathcal{R}_{\nu}(u_{i}u_{j})||_{L^{2}} \lesssim \lambda_{\nu}^{1}\lambda_{n}^{1+2\sigma(4)}.
\end{equation}
Since, by the Weyl formula, $n \approx \lambda_{n}^{d}$ and $\nu \approx \lambda_{\nu}^{d}$, this inequality implies that
\begin{equation}\label{Rnuuu}
||\mathcal{R}_{\nu}(u_{i}u_{j})||_{L^{2}(M)} \leq C(n/\nu)^{1/d}n^{\frac{2}{d}\sigma(4)}.
\end{equation}
This of course yields (\ref{Ru}) with $\sigma$ there being $\frac{2}{d}\sigma(4)$.
\end{proof}

\subsection{Proof of Lemma \ref{for}.}
\begin{proof}[Proof]
To prove (\ref{H}) we first recall some basic facts about Sobolev spaces on manifolds. See \cite{sogge4} for more details. First, if $1 = \sum_{j=1}^{N}\varphi_{j}$ is a fixed smooth partition of unity with 
\begin{equation}\label{supp}
supp \varphi_{j} \Subset \Omega_{j},
\end{equation}
where $\Omega_{j} \subset M$ is a coordinate patch, we have fixed $\mu = 1$
\begin{equation}\label{fH}
||f||_{H^{1}(M)} \approx \sum_{j=1}^{N}\sum_{|\alpha| \leq 1}||\partial^{\alpha}(\varphi_{j}f)||_{L^{2}(\mathbb{R}^{n})}.
\end{equation}
Here, the $L^{2}$-norms are taken with respect to our local coordinates. $||u_{j_{1}}u_{j_{2}}||_{H^{1}(M)}$ is dominated by a finite sum of terms of the form
\begin{equation}\label{partial}
||\partial^{\alpha}(\varphi \cdot u_{j_{1}}u_{j_{2}})||_{L^{2}},
\end{equation}
where $\varphi = \varphi_{j}$ for some $j = 1, \dots, N$ and $|\alpha| \leq 1$. By Leibniz's rule, we can thus dominate the left side of (\ref{H}) by a finite sum of terms of the form 
\begin{equation}\label{Lk}
||L_{1}u_{j_{1}}L_{2}u_{j_{2}}||_{L^{2}(M)},
\end{equation}
where $L_{k} : C^{\infty}(M) \rightarrow C^{\infty}(M)$ are differential operators with smooth coefficients of order $m_{k}$ with 
\begin{equation}\label{m}
m_{1} + m_{2} \leq 1.
\end{equation}
As a result, by Holder's inequality, $||u_{j_{1}}u_{j_{2}}||_{H^{1}(M)}$ is majored by a finite sum of terms of the form 
\begin{equation}\label{productL}
\prod_{k=1}^{2}||L_{k}u_{j_{k}}||_{L^{4}(M)},
\end{equation}
where the $L_{k}$ are as above. Since $L_{k}$ is a differential operator of order $m_{k}$, for any $1<p<\infty$, we can obtain the following inequality based on the results in \cite{sogge1} and \cite{sogge3}.
\begin{equation}\label{L4}
\begin{aligned}
||L_{k}u_{j_{k}}||_{L^{4}(M)} 
&\lesssim \lambda_{n}^{m_{k} + \sigma(4)} (\lambda^{-1}||(\Delta_{g} + \lambda^{2})u_{j_{k}}||_{L_{2}}+||u_{j_{k}}||_{L_{2}}).
\end{aligned}
\end{equation}
By (\ref{m}) and (\ref{productL}), we obtain (\ref{H}) from this, which finishes the proof of Lemma \ref{for}.
\end {proof}
\subsection*{Acknowledgements}
\

\noindent

The research was carried out while the author was visiting Johns Hopkins University supervised by Professor C.D. Sogge. And the author would like to express her deep gratitude to Professor C.D. Sogge, for bringing this research topic to her attention, and also for the valuable guidance, helpful suggestions and comments he provided.

\bibliographystyle{plain}

\end{document}